\def\Om{\Omega}
\def\K{\mathcal{K}}
\def\R{\mathbb{R}}
\numberwithin{equation}{section}
\numberwithin{figure}{section}
\newtheorem{Theorem}{Theorem}[section]
\newtheorem{Proposition}[Theorem]{Proposition}
\newtheorem{Remark}[Theorem]{Remark}
\title{Elasticae and inradius}
\author{Antoine Henrot \and Othmane Mounjid}
\date{\today}
\begin{document}
\maketitle

\begin{abstract}
The elastic energy of a planar convex body is defined by $E(\Om)=\frac 12\,\int_{\partial\Om} k^2(s)\,ds$
where $k(s)$ is the curvature of the boundary. In this paper we are interested in the minimization problem
of $E(\Om)$ with a constraint on the inradius of $\Om$. By contrast with all the other minimization problems
involving this elastic energy (with a perimeter, area, diameter or circumradius constraints) for which the
solution is always the disk, we prove here that the solution of this minimization problem is not the disk and we completely characterize
it in terms of elementary functions.
\end{abstract}

{\emph Key words:} Elastic energy, convex geometry, inradius,  shape optimization.

{\emph Subject classification:} {primary: 52A40; secondary: 49Q10, 52A10}

%\tableofcontents

\section{Introduction}
Following L. Euler, we define the elastic energy of a regular planar convex body $\Omega$ (a planar convex compact set) by
the formula
$$
E(\Omega)=\displaystyle \frac 12 \int_{\partial \Omega} k^2(s) \ ds
$$
where $k$ is the curvature and $s$ is the arc length. We will denote by $\K$ the class of regular ($C^{1,1}$) bounded planar convex
bodies. Several recent works involving this elastic energy appeared during the two last years,
see \cite{BHT}, \cite{BH}, \cite{FKN}. In particular, the authors were interested in finding sharp inequalities between $E(\Om)$,
the area $A(\Om)$ or the perimeter $P(\Om)$. More generally, we can consider several minimization problems for $E(\Om)$ when
putting different geometric constraints on $\Om$.  We will use the notation $D(\Om)$ for the diameter, $R(\Om)$ for the circumradius
(radius of the smallest disk
containing $\Om$) and $r(\Om)$ for the inradius (radius of the largest disk contained in $\Om$). Let us remark that $E$ scales
as $t^{-1}$ under a dilation: $E(t\Om)=E(\Om)/t$, thus it will always be equivalent to consider the minimization problem with
an equality constraint or with an inequality constraint.
Let us recall what is known (or what is an easy consequence of what is known) in
that context.

\smallskip\noindent
{\bf Minimization with a perimeter constraint}:\\
The solution of $\min\{E(\Om), \Om\in\K, P(\Om)\leq p\}$ is the disk of perimeter $p$.\\
Indeed by using the Cauchy-Schwarz inequality we deduce
\begin{equation}\label{minper}
2\pi= \int_{\partial \Omega} k \ ds
\leq \left(\int_{\partial \Omega} k^2 \ ds \right)^{1/2} \left(P(\Omega)\right)^{1/2}=\sqrt{2p}\sqrt{E(\Om)},
\end{equation}
with equality only in the case of a disk.
This minimal property can also be written by homogeneity
\begin{equation}\label{int1}
    \forall \Om\in\K,\quad E(\Om) P(\Om) \geq 2\pi^2\,.
\end{equation}
Let us remark that this minimal property can be extended in a straightforward way to any regular set $\Om$: by filling
the holes we decrease both perimeter and elastic energy, so it is enough to consider only simply connected sets for
which the previous proof works as well.

\smallskip\noindent
{\bf Minimization with an area constraint}: \\
The solution of $\min\{E(\Om), \Om\in\K, A(\Om)\leq a\}$ is the disk of area $a$.\\
For convex domains, this result has been proved by M.E. Gage in \cite{Gage}. It has been recently extended to simply connected
domains by D. Bucur and the first author in \cite{BH} and by V. Ferone, B. Kawohl and C. Nitsch in \cite{FKN}. It is trivially
wrong if we remove the assumption of simple-connectedness: consider a ring $\{R_1\leq |x|\leq R_2\}$ with $R_1,R_2 \to +\infty$.
This minimal property can also be written by homogeneity
\begin{equation}\label{int2}
    \forall \Om\in\K,\quad E(\Om)^2 A(\Om) \geq \pi^2\,.
\end{equation}

\smallskip\noindent
{\bf Minimization with a diameter constraint}: \\
The solution of $\min\{E(\Om), \Om\in\K, D(\Om)\leq d\}$ is the disk of diameter $d$.\\
Indeed, it is well known that for any plane convex domain $\Om$, the inequality $P(\Om)\leq \pi D(\Om)$ holds true (with equality
for any set of constant width), see \cite{SA}. Thus, combining with (\ref{int1}) yields
\begin{equation}\label{int3}
D(\Om) E(\Om) \geq \frac{P(\Om) E(\Om)}{\pi}\,\geq 2\pi
\end{equation}
with equality only for the disk. This result still holds for any regular planar set, since taking the convex hull does not change
the diameter while it decreases the elastic energy.

\smallskip\noindent
{\bf Minimization with a circumradius constraint}: \\
The solution of $\min\{E(\Om), \Om\in\K, R(\Om)\leq R\}$ is the disk of radius $R$.\\
Indeed, it is well known that for any plane convex domain $\Om$, the inequality $P(\Om)\leq 2\pi R(\Om)$ holds true (with equality
for any set of constant width), see \cite{SA}. Thus, combining with (\ref{int1}) yields
\begin{equation}\label{int4}
R(\Om) E(\Om) \geq \frac{P(\Om) E(\Om)}{2\pi}\,\geq \pi
\end{equation}
with equality only for the disk. This result still holds for any regular planar set, since taking the convex hull does not change
the circumradius while it decreases the elastic energy.

\smallskip\noindent
The last problem we want to consider is the minimization problem with an inradius constraint. Without loss of generality and because
the homogeneity property, we can fix the inradius to be one. We will denote by $\mathbb{D}$ the unit disk. Thus we want to study:
\begin{equation}\label{intr}
    \min\{E(\Om), \Om\in\K, r(\Om)\leq 1\}
\end{equation}
Since we work with bounded convex domains, the strip $\{0\leq y\leq 1\}$ (which would have zero elastic energy)
is not admissible.
It turns out that this problem is more difficult and surprisingly, we will discover that the optimal domain {\bf is not} the disk.
More precisely, the main result of this paper is
\begin{Theorem}\label{mainTh}
For any convex domain $\Om\in\K$ with inradius $r(\Om)$, the following inequality holds
\begin{equation}\label{theo1}
    E(\Om) r(\Om) \geq 2 \left( \int_0^{\frac{\pi}{2}} \sqrt{\cos t}\,dt\right)^2\,.
\end{equation}
Equality in (\ref{theo1}) is obtained for the convex domain $\Om^*$ symmetric with respect to $x=0$
and defined on $x\geq 0$ by, see Figure \ref{optimal_curve}:

\begin{figure}[h]\label{optimal_curve}
\begin{center}
\scalebox{.5}{\includegraphics{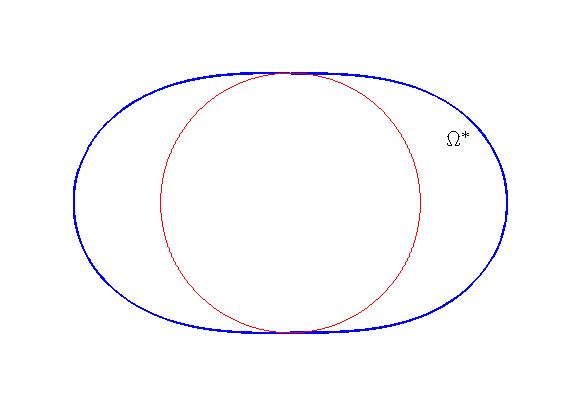}}
\caption{The optimal convex domain}
\end{center}
\end{figure}

\begin{equation}\label{optdom}
    \left\lbrace
    \begin{array}{ccc}
      x(s) & = & \frac{2}{a}\,\sqrt{\sin \theta(s)} \\
      y(s) & = & -1 + \frac{1}{a}\,\int_0^{\theta(s)}\sqrt{\sin u} du
    \end{array}\right.
  \quad s\in [0,L_0]
\end{equation}
where
\begin{equation}\label{optdom2}
    a=\int_0^{\frac{\pi}{2}} \sqrt{\cos t} dt,\quad L_0=\frac{2}{a}\,\int_0^{\frac{\pi}{2}} \frac{dt}{\sqrt{\cos t}},\quad \theta \mbox{ solution of the ODE }
     \left\lbrace
    \begin{array}{c}
      \theta'(s)=a\sqrt{\sin\theta(s)} \\
      \theta(0)=0
    \end{array}\right.
\end{equation}
Moreover, the elastic energy of the optimal domain is $E(\Omega^*)=2 a^2$.
\end{Theorem}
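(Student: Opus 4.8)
The plan is to set up the problem as a constrained minimization over convex curves and extract the Euler--Lagrange equation for the boundary curvature. First I would parametrize a convex body $\Om$ by its support function or, more conveniently for this problem, by arc length, writing the boundary as $(x(s),y(s))$ with tangent angle $\varphi(s)$ so that $k=\varphi'$. The inradius constraint $r(\Om)\le 1$ is awkward because the incircle touches $\partial\Om$ at an a priori unknown set of points; the key observation is that for the optimal domain the incircle should touch $\partial\Om$ at exactly the points where the curvature is largest, and between consecutive contact points the curve is a free elastica. So the first real step is to reduce \eqref{intr} to the following: up to symmetry, $\Om^*$ is tangent to the unit disk at a single point (the bottom, $y=-1$ in the stated normalization) and consists of two symmetric arcs each of which is a critical point of $\int k^2\,ds$ subject to the constraint that the arc stays outside the open unit disk and closes up convexly. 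One should argue (e.g.\ by a cutting/reflection argument, or by the observation that adding more contact points only increases energy) that a single contact point is optimal.

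Next I would derive the shape of the free arc. Away from the contact point the curve minimizes $\frac12\int k^2\,ds$ with only the length and closure (rotation index) playing a role, so it satisfies the classical elastica equation $2k'' + k^3 - \mu k = 0$ for a Lagrange multiplier $\mu$ (this is Euler's equation; it also follows from $\frac{d}{ds}$ of the first integral $k'^2 + \tfrac14 k^4 - \tfrac{\mu}{2}k^2 = \text{const}$). I expect that the correct reformulation uses the tangent angle $\theta$ measured appropriately and that the stated system \eqref{optdom}--\eqref{optdom2} is exactly the integral of this elastica equation with the particular multiplier forced by the inradius normalization. Concretely, one checks that with $\theta$ solving $\theta' = a\sqrt{\sin\theta}$ one has $k(s) = \varphi'(s)$ expressible through $\sqrt{\sin\theta}$, that $x'^2+y'^2 = 1$ along \eqref{optdom}, that the curve meets the $y$-axis orthogonally at $s=0$ and at $s=L_0$ closes up with the reflected copy into a $C^{1,1}$ convex loop, and that it stays outside the unit disk with equality only at the single contact point $s=L_0$ (where $\theta=\pi/2$, $x=0$, $y=-1$). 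These are the routine verifications; the value $a=\int_0^{\pi/2}\sqrt{\cos t}\,dt$ is then forced by requiring the curve to actually be tangent to $|X|=1$, and a direct computation gives $\int_{\partial\Om^*} k^2\,ds = 4a^2$, i.e.\ $E(\Om^*) = 2a^2$, and $r(\Om^*)=1$, yielding equality in \eqref{theo1}.

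The harder half is the lower bound \eqref{theo1} itself, valid for \emph{every} $\Om\in\K$. Here I would not invoke the Euler--Lagrange machinery but instead find a clean integral inequality, mirroring the Cauchy--Schwarz arguments used for the perimeter, area, diameter and circumradius cases. The plan: after translating so that the incircle of radius $r=r(\Om)$ is centered at the origin, use the fact that $\partial\Om$ touches this circle and that convexity forces, at each point of $\partial\Om$ with outward normal $\nu$, the inequality $\langle x,\nu\rangle \ge -r$ (the body contains the incircle) — more usefully, the support-function bound and the touching condition give control of how far the boundary can dip. Parametrizing the boundary by the tangent angle and splitting it at the (at least one) contact point, I expect to obtain, via Cauchy--Schwarz applied to $\int k^{2/3}\cdot k^{1/3}$ or to an expression of the form $\int \sqrt{k}\cdot\sqrt{k}\cdot(\text{geometric weight})$, the bound $E(\Om)\,r(\Om)\ge 2\big(\int_0^{\pi/2}\sqrt{\cos t}\,dt\big)^2$, with the weight $\cos t$ emerging from the geometry of a boundary arc that goes from the top of the incircle around to a point where its normal makes angle $t$ with the vertical. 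The main obstacle, and the place where care is needed, is precisely identifying this geometric weight and showing the inequality is saturated exactly by the arc in \eqref{optdom} and by no other configuration (in particular, ruling out domains with several incircle contacts or with flat pieces); this is where the convexity hypothesis and the regularity $C^{1,1}$ are essential, and where one must check that passing to more general (non-simply-connected, or merely rectifiable) competitors cannot do better — though, unlike the earlier cases, here taking the convex hull is built into the hypothesis rather than an afterthought.
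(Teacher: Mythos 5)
There is a genuine gap, in two places. First, your reduction of the contact structure is wrong: you claim that the optimal $\Om^*$ touches its incircle at a \emph{single} point and that "adding more contact points only increases energy". A single contact point is impossible for any convex body (the contact set of the inball cannot lie in an open half-circle, otherwise the incircle could be translated and enlarged), and in fact the optimal domain of the theorem touches the unit disk at \emph{two antipodal} points $(0,-1)$ and $(0,1)$: in the parametrization \eqref{optdom} the contacts occur at $\theta=0$ and $\theta=\pi$, not at $\theta=\pi/2$, $s=L_0$ as you assert (at $\theta=\pi/2$ one has $x=2/a$, $y=0$, the rightmost point). Moreover, the statement that more contact points can be discarded is precisely the non-trivial part of the proof: the paper handles it by computing, for each pair of consecutive contact points subtending an angle $2\alpha$, the optimal arc energy $E(\alpha)=\frac{1}{\sin\alpha}\bigl(\int_0^\alpha\sqrt{\cos t}\,dt\bigr)^2$, proving that $E$ is strictly concave and sub-additive, $E(\alpha+\beta)<E(\alpha)+E(\beta)$, and then optimizing over the possible angular configurations; nothing in your sketch replaces this step.

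Second, and more seriously, the lower bound \eqref{theo1} --- the actual content of the theorem --- is not proved in your proposal: you only express the hope that some weighted Cauchy--Schwarz inequality with an unidentified "geometric weight $\cos t$" will deliver it, and you yourself flag identifying that weight as the main obstacle. No such one-line inequality is known here (this is exactly why the inradius case differs from the perimeter, area, diameter and circumradius cases), and the paper instead runs the full variational program: existence of a minimizer for each sub-problem $(\mathcal{P}_\alpha)$, which is delicate for $\alpha=\pi/2$ because the length of competitors is not a priori bounded and one must show the energy either blows up or stays above $a^2$ as $L\to\infty$; $W^{2,\infty}$ regularity via the Lagrange-multiplier condition $\theta'=(C-\lambda_1(y+1)-\lambda_2 x)^-$ that also encodes the convexity constraint; exclusion of segments on the optimal arc; and only then the explicit formula for $E(\alpha)$ and the sub-additivity argument. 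A further small point: between contact points the length is \emph{not} constrained, so the correct Euler--Lagrange equation on strictly convex arcs is $k''+\tfrac12 k^3=0$ (no multiplier $\mu$), consistent with $\tfrac12\theta'^2=-\lambda_2\sin\theta-\lambda_1\cos\theta$; your equation $2k''+k^3-\mu k=0$ carries a length multiplier that has no role in this problem, though your identification of the equality curve via $\theta'\propto\sqrt{\sin\theta}$ is correct in spirit.
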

\begin{Remark}
The optimal domain $\Omega^*$ described in Theorem \ref{mainTh} is clearly not unique. Indeed any "stadium-like"
domain obtained by inserting two equal horizontal segments at north and south poles of $\Omega^*$ will provide
another $C^{1,1}$ optimal domain.
\end{Remark}
\begin{Remark}
Let us remark that it can be seen directly that $\Omega^*$ has a lower elastic energy than the unit
disk $\mathbb{D}$.
Indeed, using Cauchy-Schwarz inequality for $a$ yields
$$E(\Omega^*)=2\left(\int_0^{\frac{\pi}{2}} \sqrt{\cos t} dt\right)^2 < 2 \frac{\pi}{2} \,
\int_0^{\frac{\pi}{2}} \cos t dt=\pi=E(\mathbb{D}).$$
\end{Remark}
The proof of Theorem \ref{mainTh} will follow the classical method of calculus of variations:
existence, regularity and
use of the optimality conditions to derive the optimal set which is described above. More precisely,
fixing two consecutive contact points $A=(0,-1)$ and $B=(\sin 2\alpha, -\cos 2\alpha)$
of a domain with its inscribed (unit) disk, we are
led to study the sub-problem of minimizing the elastic energy of a (convex) arc $\gamma$ joining $A$ and $B$
with tangents $\tau_A=(1,0)$ and $\tau_B=(\cos 2\alpha, \sin 2\alpha)$.
\vspace{0.4 cm}

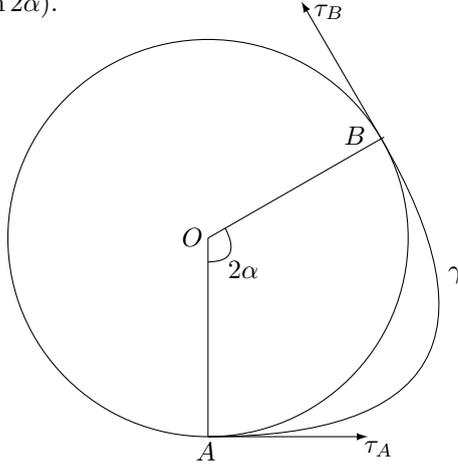
\begin{figure}[h]
\begin{center}
\setlength{\unitlength}{0.75pt}
\begin{picture}(200,200)(-120,-120)
\put(0,0){\line(0,-1){100}}
\put(0,0){\line(1.73,1){88}}
\put(0,0){\circle{200}}
\put(0,-100){\vector(1,0){80}}
\put(86.6,50){\vector(-1,1.73){40}}
\qbezier(0,-100)(173,-100)(86.6,50)
\qbezier(0,-12)(17.3,-12)(8.6,5)
\put(-6,-112){$A$}
\put(78,-108){$\tau_A$}
\put(68,47){$B$}
\put(53,112){$\tau_B$}
\put(-13,-4){$O$}
\put(120,-20){$\gamma$}
\put(10,-20){$2\alpha$}
\end{picture}
\caption{Looking for an optimal arc $\gamma$}
\end{center}
\end{figure}

If we denote by $L$ the length of the arc $\gamma$ (which is an unknown), this minimization problem
$(\mathcal{P}_\alpha)$ reads
\begin{equation}\label{pbpalpha}
(\mathcal{P}_\alpha)\quad \min\{E(\gamma), \gamma \mbox{ convex }, \gamma(0)=A, \gamma'(0)=\tau_A,
\gamma(L)=B, \gamma'(L)=\tau_B\}.
\end{equation}
In Section \ref{section2}, we prove existence of a minimizer for $(\mathcal{P}_\alpha)$. It turns out that it is
easy when $\alpha <\pi/2$ while it is much more complicated when $\alpha=\pi/2$ which corresponds to two
contact points diametrically opposite. Then Section \ref{section3} is devoted to write the optimality conditions.
We use it to prove the $C^2$ regularity of the optimal set. The good parametrization consists in working
with $\theta(s)$ the angle of the tangent with the horizontal axis. Due to the convexity asumption
(which can be seen as a constraint), the optimality conditions reads more simply for strictly convex
parts of $\gamma$. Then, we prove that the optimal arc $\gamma$ cannot contain any segment.
This allows to completely characterize the optimal arc $\gamma_\alpha$ and its elastic energy $E(\alpha):=E(\gamma_\alpha)$. In the last section we give the proof of the main Theorem. It relies on a sub-additivity property
of the function $E$: $E(\alpha+\beta)\leq E(\alpha)+E(\beta)$ (with a strict inequality if both $\alpha$ and
$\beta$ are positive).

\section{Existence}\label{section2}
We want to solve the minimization problem (\ref{pbpalpha}). We choose to parametrize the arc $\gamma$ whose arc length is denoted by $s$
by the angle $\theta(s)$ between the horizontal line and the tangent. The total length of the arc is $L$ which depends on $\gamma$
and is not fixed. The convexity of the arc is expressed by the fact that $s\mapsto \theta(s)$ is
non decreasing. The curvature is $\theta'(s)$, thus the elastic energy is defined by
$$E(\gamma)= \frac 12 \int_0^L {\theta'(s)}^2\,ds$$
and the regularity we assume is :$\theta$ belongs to the Sobolev space $H^1(0,L)$ which gives a curvature in $L^2$.
Once $\theta(s)$ is known we recover the arc by integrating:
\begin{equation}\label{recgamma}
    x(s)=\int_0^s \cos\theta(u)\,du,\quad\  y(s)=-1+ \int_0^s \sin\theta(u)\,du.
\end{equation}
Therefore, the fact that the arc $\gamma$ ends at $B$ yields
\begin{equation}\label{endgamma}
    \int_0^L \cos\theta(u)\,du=\sin 2\alpha, \quad\  -1+ \int_0^L \sin\theta(u)\,du=-\cos 2\alpha
\end{equation}
and the tangents at $A$ and $B$ impose: $\theta(0)=0$ and $\theta(L)=2\alpha$. To sum up, the minimization problem (\ref{pbpalpha})
can be written in terms of the unknown $\theta$ as
\begin{equation}\label{mintheta}
(\mathcal{P}_\alpha)\qquad
\left\lbrace\begin{array}{c} \vspace{3mm}
  \min\{\frac 12 \int_0^L {\theta'(s)}^2\,ds,\;\theta\in H^1(0,L), \theta'\geq 0 \mbox{ a.e. },\, \\
  \int_0^L \cos\theta(u)\,du=\sin 2\alpha,\,
\int_0^L \sin\theta(u)\,du=-\cos 2\alpha +1,\,\theta(0)=0,\,\theta(L)=2\alpha\}.
\end{array}\right.
\end{equation}
In this section, we are going to prove existence of a minimizer for Problem $(\mathcal{P}_\alpha)$.
We will denote by $\mathcal{M}$ the class of admissible functions $\theta$:
\begin{equation}\label{classM}
\mathcal{M}=\{\theta\in H^1(0,L), \theta'\geq 0 \mbox{ a.e. },\,
  \int_0^L \cos\theta(u)\,du=\sin 2\alpha,\,
\int_0^L \sin\theta(u)\,du=-\cos 2\alpha +1,\,\theta(0)=0,\,\theta(L)=2\alpha\}.
\end{equation}
\subsection{Existence for $\alpha<\pi/2$}
\begin{Theorem}\label{theoexistence1}
Let $\alpha<\pi/2$ be given, there exists a minimizer $\theta_\alpha$ and a corresponding arc $\gamma_\alpha$
for problem $\mathcal{P}_\alpha$.
\end{Theorem}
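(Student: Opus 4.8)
The plan is to apply the direct method of the calculus of variations. First I would take a minimizing sequence $(\theta_n)$ in $\mathcal M$, so that $\frac12\int_0^{L_n}(\theta_n')^2\,ds$ converges to the infimum, where $L_n$ is the (variable) length associated to $\theta_n$. The first order of business is to get a uniform bound on the lengths $L_n$: since the arc is convex with tangent turning from $0$ to $2\alpha<\pi$, and since it must join the two fixed points $A$ and $B$, a purely geometric estimate bounds $L_n$ from above (a convex arc whose tangent stays in a fixed half-plane cannot be arbitrarily long if its endpoints are fixed; alternatively, among convex arcs with given endpoints and given endpoint tangents, the length is controlled). I would also need $L_n$ bounded below away from $0$, which is immediate since the chord $AB$ has fixed positive length. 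Having $L_n\in[\ell,\Lambda]$ with $0<\ell\le\Lambda<\infty$, I would pass to a subsequence so that $L_n\to L_\alpha\in[\ell,\Lambda]$.

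Next I would rescale every arc to the fixed interval $[0,1]$ by setting $\varphi_n(t)=\theta_n(tL_n)$, so $\int_0^{L_n}(\theta_n')^2\,ds = L_n^{-1}\int_0^1(\varphi_n')^2\,dt$; the energy bound plus the uniform bounds on $L_n$ then give a uniform $H^1(0,1)$ bound on $\varphi_n$. By Rellich, a subsequence converges weakly in $H^1(0,1)$ and strongly (uniformly) in $C([0,1])$ to some $\varphi_\alpha$. Monotonicity $\varphi_n'\ge0$ passes to the limit (it is preserved under weak $H^1$ limits, being a closed convex constraint), as do the boundary conditions $\varphi_n(0)=0$, $\varphi_n(1)=2\alpha$ by uniform convergence. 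The two integral (endpoint) constraints $\int_0^1\cos\varphi_n\,L_n\,dt=\sin2\alpha$ and $\int_0^1\sin\varphi_n\,L_n\,dt=1-\cos2\alpha$ pass to the limit because $\varphi_n\to\varphi_\alpha$ uniformly and $L_n\to L_\alpha$, so the limiting pair $(\varphi_\alpha,L_\alpha)$ — equivalently $\theta_\alpha(s):=\varphi_\alpha(s/L_\alpha)$ on $[0,L_\alpha]$ — lies in $\mathcal M$ (with length $L_\alpha$). Finally, lower semicontinuity of the Dirichlet integral under weak $H^1$ convergence, combined with $L_n\to L_\alpha>0$, gives $\frac1{L_\alpha}\int_0^1(\varphi_\alpha')^2\,dt\le\liminf\frac1{L_n}\int_0^1(\varphi_n')^2\,dt$, i.e. $E(\gamma_\alpha)\le\inf\mathcal P_\alpha$, so $\theta_\alpha$ is a minimizer.

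The subtle point — and the place where the hypothesis $\alpha<\pi/2$ is genuinely used — is the a priori upper bound on $L_n$, together with the fact that the limiting length $L_\alpha$ is strictly positive so that the energy does not degenerate. When $\alpha<\pi/2$ the two contact points $A$ and $B$ are not diametrically opposite, the chord $AB$ together with the constraint that the tangent angle sweeps only through $[0,2\alpha]\subset[0,\pi)$ keeps the admissible arcs in a bounded region and of bounded length; I would make this quantitative by noting that, writing $x(L_n)=\int_0^{L_n}\cos\theta_n\,ds=\sin2\alpha$ and using $\theta_n$ non-decreasing with values in $[0,2\alpha]$, one controls $L_n$ from the $x$-equation when $2\alpha<\pi$ (so $\cos$ stays away from being identically non-positive on the whole range only in a controllable way) — the honest bound comes from combining the $x$- and $y$-equations. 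This is exactly the dichotomy flagged in the introduction: for $\alpha=\pi/2$ this compactness fails and a separate, harder argument is needed, but for $\alpha<\pi/2$ the above direct method goes through, which is all that Theorem \ref{theoexistence1} asserts.
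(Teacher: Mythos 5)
Your proposal is correct and follows essentially the same direct-method argument as the paper: a uniform a priori bound on the lengths $L_n$ (which is exactly where $\alpha<\pi/2$ enters), weak $H^1$ compactness and uniform convergence of the minimizing sequence, passage to the limit in the monotonicity, endpoint and integral constraints, and lower semicontinuity of the energy. The only cosmetic differences are that you handle the variable length by rescaling to $[0,1]$ while the paper extends $\theta_n$ by the constant $2\alpha$ to a fixed interval $[0,L]$, and that you bound $L_n$ analytically from the constraint equations (in effect $\cos(\theta-\alpha)\ge\cos\alpha>0$ along the arc) whereas the paper simply observes that by convexity the arc is contained in the sector delimited by the two tangents at $A$ and $B$.
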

\begin{proof}
Let $\gamma_n$ parametrized by $\theta_n \in \mathcal{M}$ be a minimizing sequence and let us denote by $L_n$
its length. By convexity the arc $\gamma_n$ is contained in the sector delimited by the two tangents at $A$
and $B$, therefore its length is uniformly bounded: $L_{n} \leq L$.
In order to work on a fixed Sobolev space  $H^1(0,L)$, we assume that $\theta_n$ is formally extended by the constant $\theta _n (L_n)=2\alpha$ on $(L_n, L]$ which does not change the integral
$2E(\gamma_n)=\int_0^L {\theta_n'(s)}^2\,ds$. Since $E(\gamma_n)$ is uniformly bounded, the sequence $\theta_n$
is bounded in $H^1(0,L)$. Up to a subsequence, we can assume that $\theta_n$ converges uniformly on $[0,L]$ to some function $\theta$ and weakly in $H^1(0,L)$. 
Now the lower semi-continuity of $\theta\mapsto \int_0^L {\theta'(s)}^2\,ds$ shows that $\theta$ provides
the desired minimizer.

\medskip
We define the limit curve $\gamma$ in the following way: $L_\gamma =\lim\inf L_{\gamma_n}$ and
$\gamma: [0, L_\gamma]\to \R^2$, $\gamma(s) =\int_0^s e^{i\theta(s)} ds + (0,-1)$. By uniform convergence
$\theta(L_\gamma)=2\alpha$ and $\gamma(L_\gamma)=B$ and the integral constraints of the optimization problem
remain satisfied at the limit also by uniform convergence, which concludes proof of Theorem \ref{theoexistence1}.
\end{proof}
\subsection{Existence for $\alpha=\pi/2$}\label{section2.2}
Now we assume that the arcs $\gamma$ touch the unit disk only at south and north pole $(0,-1)$ and $(0,1)$.
Since the arc has to be included now in the strip $\{-1\leq y\leq 1\}$ (and not in a sector)
the main difficulty is that its length is not {\it a priori} bounded. Therefore our strategy is the following:
\begin{itemize}
\item we solve the minimization problem with a fixed, given, length $L$,
\item we compute the energy of this optimal arc, say $E(L)$,
\item we prove that this energy blows up when $L\to \infty$ or at least converges to something that we control.
\end{itemize}
\begin{Theorem}\label{theoexistence2}
Let $\alpha=\pi/2$, either there exists a minimizer $\theta^*$ and a corresponding arc $\gamma^*$
for problem $\mathcal{P}_{\pi/2}$, or the value of the energy of any arc is larger than $a^2$ where
$a$ is defined in (\ref{optdom2}).
\end{Theorem}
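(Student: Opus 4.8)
The plan is to follow the three‐step scheme announced before the statement. With $\alpha=\pi/2$ the data in (\ref{mintheta})--(\ref{classM}) become $\theta(0)=0$, $\theta(L)=\pi$, $\int_0^L\cos\theta\,ds=0$, $\int_0^L\sin\theta\,ds=2$, and $E(\gamma)=\tfrac12\int_0^L(\theta')^2\,ds$. \emph{Step 1 (fixed length).} For a fixed admissible length $L$ (the set of such $L$ is nonempty, containing $\pi$, the length of the unit semicircle), take a minimizing sequence $(\theta_n)$ for the corresponding class. Since $0\le\theta_n\le\pi$ and $\int_0^L(\theta_n')^2\,ds$ stays bounded, $(\theta_n)$ is bounded in $H^1(0,L)$, so along a subsequence $\theta_n\to\theta_L$ uniformly on $[0,L]$ and $\theta_n\rightharpoonup\theta_L$ in $H^1(0,L)$. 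Monotonicity and the four boundary/integral constraints pass to the uniform limit, and $\theta\mapsto\int_0^L(\theta')^2\,ds$ is weakly lower semicontinuous, so $\theta_L$ is a minimizer at length $L$; set $E(L):=\tfrac12\int_0^L(\theta_L')^2\,ds$.

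\emph{Step 2 (the a priori bound $E(\gamma)\ge a^2$).} Let $\gamma$ be \emph{any} admissible arc, of length $L$, with angle function $\theta$. Since $\theta$ is absolutely continuous and $y\mapsto\sqrt{\sin y}$ is continuous on $[0,\pi]$, the change of variables $y=\theta(s)$ gives
\[
\int_0^L\sqrt{\sin\theta(s)}\,\theta'(s)\,ds=\int_0^\pi\sqrt{\sin y}\,dy=2\int_0^{\pi/2}\sqrt{\cos t}\,dt=2a .
\]
Applying the Cauchy–Schwarz inequality to the nonnegative functions $\theta'$ and $\sqrt{\sin\theta}$, and using $\int_0^L\sin\theta\,ds=2$,
\[
2a=\int_0^L\theta'(s)\,\sqrt{\sin\theta(s)}\,ds\le\left(\int_0^L(\theta'(s))^2\,ds\right)^{1/2}\left(\int_0^L\sin\theta(s)\,ds\right)^{1/2}=\sqrt{2E(\gamma)}\cdot\sqrt{2}=2\sqrt{E(\gamma)},
\]
whence $E(\gamma)\ge a^2$; in particular $E(L)\ge a^2$ for every admissible length $L$. (Equality forces $(\theta')^2$ proportional to $\sin\theta$, i.e.\ the ODE of (\ref{optdom2}).)

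\emph{Step 3 (the dichotomy).} Put $m:=\inf\{E(\gamma):\gamma\text{ admissible}\}$, which is finite since $m\le\pi/2$ (the semicircle). If $m$ is attained, say by an arc $\gamma^*$ with angle $\theta^*$, then $(\theta^*,\gamma^*)$ is the desired minimizer and we are in the first alternative. Suppose instead $m$ is not attained, and let $(\gamma_n)$, of length $L_n$, satisfy $E(\gamma_n)\to m$. Then $(L_n)$ cannot have a bounded subsequence: if $L_{n_k}\le\Lambda$, reparametrize on $[0,1]$ by $u_k(t)=\theta_{n_k}(L_{n_k}t)$, so that $\int_0^1(u_k')^2\,dt=2L_{n_k}E(\gamma_{n_k})$ is bounded; since moreover $L_{n_k}\ge2$ (because $2=\int_0^{L_{n_k}}\sin\theta_{n_k}\,ds\le L_{n_k}$), $(u_k)$ is bounded in $H^1(0,1)$, and along a further subsequence $L_{n_k}\to L^*\in[2,\Lambda]$, $u_k\to u$ uniformly on $[0,1]$ and $u_k'\rightharpoonup u'$ in $L^2(0,1)$; all constraints pass to the limit, and since $\liminf_k\int_0^1(u_k')^2\,dt\ge\int_0^1(u')^2\,dt$ while $L_{n_k}\to L^*>0$, the arc of length $L^*$ with angle $s\mapsto u(s/L^*)$ has energy $\le\liminf_kE(\gamma_{n_k})=m$, hence realizes $m$ — contradicting non‐attainment. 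Therefore $L_n\to\infty$; but by Step 2, $E(\gamma_n)\ge a^2$ for all $n$, so $m\ge a^2$, and since $m$ is not attained every admissible arc satisfies $E(\gamma)>m\ge a^2$. This is the second alternative, and the theorem is proved.

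\emph{Main obstacle.} The only genuinely delicate point is exactly the one flagged before the statement: along a minimizing sequence the lengths $L_n$ may run off to infinity, in which case no limiting arc can be extracted in any fixed function space. What rescues the argument is the explicit inequality of Step 2, which substitutes an \emph{a priori} bound for the missing compactness; the crux there is making the substitution $y=\theta(s)$ and the area‐type constraint $\int_0^L\sin\theta=2$ combine into precisely the constant $a=\int_0^{\pi/2}\sqrt{\cos t}\,dt$. (Alternatively one may stay closer to the announced scheme and estimate $E(L)$ directly as $L\to\infty$ via the Euler–Lagrange equation of the fixed‐length problem from Step 1, but that route is heavier than the Cauchy–Schwarz bound above.)
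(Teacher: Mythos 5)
Your proof is correct, but it takes a genuinely different (and in fact more direct) route than the paper. The paper never proves a universal lower bound: it fixes the length $L$, derives the Euler--Lagrange (pendulum) equation $\theta'=\sqrt{c^2-a_0\sin\theta}$ for the fixed-length minimizer, expresses $E(L)=c^2L/2-a_0$, and then runs a case analysis on the multipliers ($a_0\geq 0$ forces $c^2\geq 1$ so $E(L)$ grows linearly in $L$; $a_0<0$ with $c$ bounded below gives $E(L)\to\infty$; $a_0<0$ with $c\to 0$ gives $\liminf E(L)\geq -a_0=a^2$ by passing to the limit in the constraint \eqref{op5}), which is the source of the dichotomy in the statement. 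You instead prove the clean a priori inequality $E(\gamma)\geq a^2$ for \emph{every} admissible arc, using only the chain-rule identity $\int_0^L\theta'\sqrt{\sin\theta}\,ds=\int_0^\pi\sqrt{\sin y}\,dy=2a$ (valid since $\theta\in H^1$, $\theta'\geq 0$, $\theta(0)=0$, $\theta(L)=\pi$) together with Cauchy--Schwarz and the constraint $\int_0^L\sin\theta\,ds=2$; the dichotomy then follows by pure logic (if the infimum is not attained, every arc has energy $>m\geq a^2$). This buys several things: it avoids the Lagrange-multiplier analysis and the somewhat delicate limit $c\to 0$ in \eqref{op5}; it is sharp, since the equality case of Cauchy--Schwarz is exactly $\theta'=a\sqrt{\sin\theta}$, i.e.\ the ODE \eqref{eqdifinal} of the optimal domain, so your Step 2 already identifies the optimal value $E(\pi/2)=a^2$ of the main theorem; and it makes your Step 1 (fixed-length existence) and the length-blow-up argument in Step 3 logically redundant for this theorem --- they are correct (the reparametrization to $[0,1]$ and the bound $L_{n_k}\geq 2$ are fine) but no longer needed once the universal bound is in hand. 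What you lose relative to the paper is only the additional structural information about the fixed-length minimizers (the pendulum equation), which the paper does not in fact reuse later.
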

\begin{proof}
{\bf First step:} we fix a length $L>\pi$ and we solve the minimization problem with this length.
Actually, it is exactly the problem of elasticae as introduced by L. Euler since the two extremities points and the
two tangents are given. Existence of an optimal arc follows exactly as in the proof of Theorem \ref{theoexistence1}.
We denote by $\gamma_L$ such an optimal arc and $\theta_L$ its parametrization. Let us remark that we do not need
to assume convexity here.

\medskip\noindent
{\bf Second step:} Let us write the optimality conditions for the optimal arc. We denote it $\theta$ instead of $\theta_L$.
\begin{Proposition}\label{propopti1}
There exists two constants $a_0,c$ with $c^2\geq a_0$ such that that $\theta(s)$ satisfies
$\theta'(s)=\sqrt{c^2-a_0 \sin\theta(s)}$ and $\theta(0)=0$, $\theta(L)=\pi$.
\end{Proposition}
\begin{proof}[Proof of Proposition \ref{propopti1}]
The optimality conditions for the problem of calculus of variations \ref{mintheta} writes:
there exist four Lagrange multipliers $a_0,b,c,d\in \mathbb{R}$ such that for any $v\in H^1(0,L)$:
\begin{equation}\label{op0}
\int_0^L \theta'v'\,ds=a_0\int_0^L \cos\theta(s)\, v(s)\,ds + b\int_0^L \sin\theta(s)\, v(s)\,ds +c v(0) +d v(L)\,.
\end{equation}
This implies that $\theta$ solves the ordinary differential equation
\begin{equation}\label{op1}
\left\lbrace\begin{array}{l}
-\theta''=a_0\cos\theta + b\sin\theta, \quad s\in (0,L)\\
\theta(0)=0,\quad \theta(L)=\pi,\\
\theta'(0)=-c,\quad \theta'(L)=d\,.
\end{array}
\right.
\end{equation}
This equation shows that $\theta$ is a $C^\infty$ function. Actually, this equation is a classical pendulum
equation and its solution can be written explicitly in terms of elliptic functions. We will not  use it here.
Moreover, $\theta$ must satisfy the constraints
$$\int_0^L \cos\theta(s)\,ds=0,\qquad \int_0^L \sin\theta(s)\,ds=2\,.$$
Using these relations together with $v=1$ in (\ref{op0}) yields $c+d=-2b$. Choosing $v=\theta'$ in (\ref{op0})
yields $\frac 12 (c^2-d^2)=2b$. Therefore, we get either $c+d=0$ (and then $b=0$) or $c-d=-2$. Now, the fact
that $\gamma$ has to be externally tangent to the unit disk at points $A$ and $B$ shows that its curvature has to
be smaller than one. This implies $\theta'(0)=-c\leq 1$ and $\theta'(L)=d\leq 1$. Thus $c-d\geq -2$ and if we have
$c-d=-2$, it would imply $c=-1$ and $d=1$. Therefore, in any case we conclude that $c+d=0$ and $b=0$. Multiplying
by $\theta'$ the equation satisfied by $\theta$ and integrating yields
\begin{equation}\label{op2}
{\theta'(s)}^2=c^2-a_0 \sin\theta(s)\,.
\end{equation}
Since $\theta(s)$ has to go from $0$ to $\pi$, equation (\ref{op2}) implies that $c^2\geq a_0$ and $c^2-a_0 \sin\theta$
keeps a constant sign. Therefore, $\theta'$ has to be non negative and is given by
\begin{equation}\label{op3}
\theta'(s)=\sqrt{c^2-a_0 \sin\theta(s)}\,.
\end{equation}
\end{proof}
We come back to the proof of Theorem \ref{theoexistence2}.
Integrating equation (\ref{op2}) between $0$ and $L$ gives the value of the minimal energy:
\begin{equation}\label{ener1}
E(\gamma_L)=E(L)=c^2 \frac{L}{2}\,-a_0\,.
\end{equation}
The constraint $\int_0^L \sin\theta(u)\,du=2$ can be rewritten, using the change of variable $u=\theta(s)$:
\begin{equation}\label{op4}
\int_0^\pi \frac{\sin u}{\sqrt{c^2 - a_0\sin u}}\,du = 2
\end{equation}
or, by symmetry
\begin{equation}\label{op5}
\int_0^{\pi/2} \frac{\sin u}{\sqrt{c^2 - a_0\sin u}}\,du = 1\,.
\end{equation}
We first consider the case $a_0\geq 0$.  In that case
$$\frac{1}{|c|}=\int_0^{\pi/2} \frac{\sin u}{|c|}\,du\leq \int_0^{\pi/2} \frac{\sin u}{\sqrt{c^2 - a_0\sin u}}\,du = 1$$
and then, we have $c^2\geq 1$ and $a_0\leq c^2$. From (\ref{ener1}) we get $E(L)\geq c^2(\frac{L}{2}\,-1)\geq \frac{L}{2}\,-1$.
This shows that for $L\geq L_0:=\frac{\pi}{2} +2$, we have $E(L)\geq \pi/4$ which is the energy of the half circle.
Thus we can restrict us to $L\leq L_0$ and existence follows exactly as in the proof of Theorem \ref{theoexistence1}.

\medskip
Let us now consider the case $a_0<0$. In that case, $E(L)\geq c^2 L/2$. Letting $L$ going to $+\infty$, either $c$
(which depends on $L$) is bounded from below and then $E(L)\to \infty$ which allow us, as in the previous case,
to restrict to some bounded interval $L\leq L_0$ ensuring existence. The only remaining case is when $c\to 0$.
In that case, passing to the limit in equation (\ref{op5}) yields
$$\int_0^{\pi/2} \sqrt{\sin u}\,du = \sqrt{|a_0|}.$$
This implies
$$\liminf E(L)\geq -a_0 = \left( \int_0^{\pi/2} \sqrt{\sin u}\,du\right)^2$$
but the right-hand side is precisely the value found for the minimum of $E$ in Theorem \ref{optdom2}. This finishes the proof.
\end{proof}

\section{Optimality conditions}\label{section3}
In this section, we want first to write the general optimality conditions satisfied by the minimizer $\theta$.
We deduce $W^{2,\infty}$ regularity of the minimizer. Then we prove that the optimal arc is strictly convex: it
contains no segments. At last, we describe the optimal arc explicitly
\subsection{The general optimality conditions}
\begin{Theorem}\label{Ththeta}
 Assume $\theta$ is associated to an optimal arc solution of \eqref{mintheta}.
 Then $\theta\in W^{2,\infty}(0,L)$ and there exist Lagrange multipliers $\lambda_1$, $\lambda_2$
 and a constant $C$ such that, for all $s\in [0,L)$
\begin{equation}\label{first-opt}
\theta'(s) = \left(C-\lambda_1(y(s)+1)-\lambda_2 x(s)\right)^-
\end{equation}
where $(\cdot)^-$ denotes the negative part of a real number.
\end{Theorem}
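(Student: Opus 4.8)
The plan is to derive the optimality condition by perturbing the minimizer $\theta$ inside the admissible class $\mathcal{M}$, being careful about the two unilateral constraints: the convexity constraint $\theta'\geq 0$ and the fact that $L$ itself is a free parameter. First I would handle the interior (unconstrained in sign) variations. Take a test function $v\in H^1(0,L)$ with $v(0)=v(L)=0$; such a $v$ is an admissible direction for the boundary conditions $\theta(0)=0$, $\theta(L)=2\alpha$, but it must be combined with a two-parameter correction to preserve the two integral constraints $\int_0^L\cos\theta\,du=\sin 2\alpha$ and $\int_0^L\sin\theta\,du=1-\cos 2\alpha$. By the usual Lagrange-multiplier argument (the constraint map $\theta\mapsto(\int\cos\theta,\int\sin\theta)$ has surjective differential except in degenerate configurations, which one checks directly), there exist $\lambda_1,\lambda_2\in\mathbb{R}$ so that
\[
\int_0^L \theta' v'\,ds \;=\; -\lambda_2\int_0^L \sin\theta\,v\,ds \;+\;\lambda_1\int_0^L \cos\theta\,v\,ds
\]
for all such $v$, where I have lined up the signs with the parametrization $x'=\cos\theta$, $y'=\sin\theta$ from \eqref{recgamma}. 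Recognizing $-\sin\theta = -x''$ wait—rather, since $x' = \cos\theta$ and $y' = \sin\theta$, integrating by parts on the right gives $\int_0^L\theta'v'\,ds = \int_0^L\big(\lambda_1(y+1) + \lambda_2 x\big)' \cdot(-v')\,ds$ after one more integration by parts, wait: the cleanest route is to observe $\tfrac{d}{ds}(y+1) = \sin\theta$ and $\tfrac{d}{ds}x = \cos\theta$, so the right-hand side equals $\int_0^L\big(\lambda_1\sin\theta - \lambda_2\cos\theta\big)v$ — matching signs so that $\theta' = -\big(C - \lambda_1(y+1) - \lambda_2 x\big)$ holds in the strictly convex region. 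I would fix the sign conventions of $\lambda_1,\lambda_2$ at this point to produce exactly \eqref{first-opt}, and the constant $C$ appears as the integration constant.

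Next I would incorporate the convexity constraint $\theta'\geq 0$. On the open set where $\theta'>0$ the constraint is inactive, so the Euler--Lagrange identity above holds and yields $\theta'(s) = C - \lambda_1(y(s)+1) - \lambda_2 x(s)$ there; in particular this shows $\theta'$ is Lipschitz (it equals a $C^1$ function of $s$), whence $\theta\in W^{2,\infty}$ on that set. On the complementary set, where $\theta'=0$ (a union of segments of the arc), one argues via one-sided variations: pushing $\theta'$ up is admissible and gives the inequality $C - \lambda_1(y+1)-\lambda_2 x \leq 0$ there. Combining the two regimes, $\theta'(s) = \big(C - \lambda_1(y+1) - \lambda_2 x\big)^+$ on the part of the domain where that quantity is nonneg... — here I must reconcile signs: with the conventions that make \eqref{first-opt} come out as the \emph{negative} part, the affine function $C-\lambda_1(y+1)-\lambda_2 x$ is $\leq 0$ on the straight segments and equals $-\theta'\leq 0$ on the curved parts, so in all cases $\theta' = \big(C-\lambda_1(y+1)-\lambda_2 x\big)^-$, which is \eqref{first-opt}. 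The global $W^{2,\infty}$ regularity then follows because $t\mapsto t^-$ is Lipschitz and $s\mapsto C-\lambda_1(y(s)+1)-\lambda_2 x(s)$ is $C^1$ with bounded derivative ($|x'|,|y'|\leq 1$), so the composition is Lipschitz, i.e. $\theta'\in W^{1,\infty}$.

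I would also need to address the free endpoint $L$: varying $L$ (equivalently, using test functions that do not vanish at $L$, together with the extension-by-constant convention from Section~\ref{section2}) produces a natural (transversality) condition. Since $\theta'(L^-)$ must match the curvature bound and, for the subproblem, the extension keeps $\theta$ constant past $L$, the variation in $L$ contributes a boundary term that forces $\theta'(L)$ to be consistent with \eqref{first-opt} evaluated at $s=L$ (one gets $\theta'(L)=\big(C-\lambda_1(y(L)+1)-\lambda_2 x(L)\big)^-$), which is why the statement is asserted for $s\in[0,L)$ and the endpoint is handled by continuity. The main obstacle I anticipate is the bookkeeping around the inequality constraint: one must show that the set $\{\theta'=0\}$ is genuinely where the affine expression is nonpositive (rather than merely a.e.\ statements), and that no pathological interaction between the convexity constraint and the integral constraints prevents the multiplier rule from applying — i.e., a constraint-qualification check. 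This is routine but delicate, and it is also the place where the later claim ``the optimal arc contains no segment'' will be leveraged to simplify \eqref{first-opt}; for the present theorem, though, it suffices to carry the negative-part formulation through, so I would keep the argument at the level of one-sided Gateaux derivatives and the standard Lagrange multiplier theorem in Banach spaces.
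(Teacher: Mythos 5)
Your overall direction (a KKT/multiplier rule that treats the convexity constraint $\theta'\geq 0$ as a unilateral constraint, with the Euler--Lagrange identity on the inactive set and an inequality on the active set) is the right one, but the proposal has a genuine gap exactly at the step you flag as ``routine but delicate''. A priori $\theta\in H^1(0,L)$ only, so $\theta'$ is merely an $L^2$ function: the sets $\{\theta'>0\}$ and $\{\theta'=0\}$ are defined up to null sets, and your decomposition of the arc into open ``curved parts'' and ``straight segments'' on which you run separate variations presupposes the continuity of $\theta'$ --- i.e.\ part of the $W^{2,\infty}$ regularity that the theorem is supposed to deliver. Moreover, even granting such a decomposition, integrating the Euler--Lagrange equation on each connected component of $\{\theta'>0\}$ produces a priori a different integration constant $C_i$ on each component, and your one-sided variations on each segment produce an inequality with yet another constant; the content of \eqref{first-opt} is precisely that a \emph{single} constant $C$ works globally (this encodes the corner conditions at the junctions between strictly convex arcs and segments), and nothing in your sketch forces the constants to match. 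The paper avoids both problems at once: it applies the multiplier rule of Maurer--Zowe with a \emph{functional} multiplier $\ell\in L^2_+(0,L)$ attached to the constraint $\theta'\geq 0$, integrates the weak Euler--Lagrange identity by parts over the whole interval to get the a.e.\ identity $\theta'=-F+\ell-C$ with one constant $C$ (see \eqref{opt3}), and then combines the sign conditions $\theta'\geq 0$, $\ell\geq 0$ with the complementarity $\ell\,\theta'=0$ through the algebraic decomposition of $\ell(\ell-F-C)$ into three nonnegative terms to conclude $\ell=(F+C)^+$, hence $\theta'=(F+C)^-$ a.e.; since the right-hand side is Lipschitz, this simultaneously yields the pointwise formula and the $W^{2,\infty}$ regularity. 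Some device of this kind (or an equivalent argument working purely with a.e.\ information) is what is missing from your proof.

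Two smaller points. First, your sign bookkeeping on the active set is backwards: with the convention that $t^-=\max(-t,0)\geq 0$ (the only one compatible with $\theta'\geq 0$ and \eqref{first-opt}), the formula requires $C-\lambda_1(y+1)-\lambda_2 x\geq 0$ where $\theta'=0$, not $\leq 0$ as you wrote; as stated, your two regimes do not combine into the negative-part formula. Second, the discussion of varying the free length $L$ and a transversality condition is not needed for this theorem: the paper applies the multiplier rule on the fixed interval $(0,L)$ determined by the optimal arc, with the endpoint conditions $\theta(0)=0$, $\theta(L)=2\alpha$ handled by the scalar multipliers $\lambda_3,\lambda_4$, and the statement for $s\in[0,L)$ then follows from the formula itself, not from an extra natural boundary condition.
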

\begin{proof}
Let us consider the minimizer $\theta(s)$ solution of
\begin{equation}\label{pb_min_theta}
\min_{\theta\in \mathcal{M}} \frac 12\,\int_0^L {\theta'(s)}^2\,ds
\end{equation}
where $\mathcal{M}$ is defined in (\ref{classM}).
Using classical theory for this kind of optimization problem with constraints in a Banach space (see, for instance,
Theorem 3.2 and Theorem 3.3 in \cite{MaurerZowe}),
we can derive the optimality conditions.
More precisely, let us introduce the closed convex cone $K$ of
$L^2(0,L)\times \mathbb{R}^4$ defined by
$$
K:=L^2_+(0,{L})\times \{(0,0,0,0)\},
$$
where
$$
L^2_+(0,{L}):= \left\{\ell \in L^2(0,L) \  ; \
  \ell \geq 0 \right\}.
$$
We also set for $\theta\in W^{1,2}(0,L)$
$$
m(\theta)=\left(\theta', \quad \int_0^{L} \cos(\theta(s)) \ ds-\sin 2\alpha, \quad \int_0^{L}  \sin(\theta(s)) \ ds - 1 +
\cos 2\alpha,
  \quad \theta(0),\quad
\theta({L}) - 2\alpha
\right).
$$
Then, Problem \eqref{pb_min_theta} can be written as
$$
\inf \left\{\frac 12\,\int_0^L {\theta'(s)}^2\,ds, \quad \theta\in W^{1,2}(0,{L} ), \quad m(\theta)\in K\right\}.
$$

As a consequence, for a solution $\theta$ of \eqref{pb_min_theta}
there exist Lagrange multipliers $\ell\in L^2_+(0,{L} )$, $(\lambda_1,\lambda_2,\lambda_3,\lambda_4)\in\mathbb{R}^4$
such that the two following conditions hold:
\begin{eqnarray*}
\int_0^L \theta' v'(s)\,ds=
\langle(\ell,\lambda_1,\lambda_2,\lambda_3,\lambda_4),m'(\theta)(v)\rangle_{L^2(0,L)\times
  \mathbb{R}^4} \quad \forall v\in W^{1,2}(0,L),\\
\left((\ell,\lambda_1,\lambda_2,\lambda_3,\lambda_4),m(\theta)\right)_{L^2(0,{L} )\times
  \mathbb{R}^4} =0.
  \end{eqnarray*}

The two above conditions can be written as
\begin{equation}
 \label{opt1}
   \int_0^{L}  \theta' v' \ ds
= \int_0^{L}   \ell v' \ ds
-\lambda_1 \int_0^{L}  \sin(\theta) v \ ds
+\lambda_2 \int_0^{L}  \cos(\theta) v \ ds
+\lambda_3 v(0) +\lambda_4 v(L)
\end{equation}
\begin{equation}
  \label{opt2}
  \int_0^{L}  \ell \theta' \ ds=0.
\end{equation}
We thus define
\begin{equation}\label{14:41}
f(s)=\lambda_1 \sin(\theta(s))-\lambda_2 \cos(\theta(s))\quad \mbox{for } s \in [0,L]
\end{equation}
and we rewrite \eqref{opt1} as
\begin{equation}
   \int_0^{L}  \theta' v' \ ds
+  \int_0^{L}  f v \ ds
= \int_0^{L}   \ell v' \ ds
\qquad v\in W^{1,2}_0(0,{L} ).\label{opt1bis}
\end{equation}
Let us consider the continuous function $F\in W^{1,\infty}(0,L)$ defined by
\begin{equation}\label{1.1}
F(s):=-\int_0^s f(t) \ dt.
\end{equation}
Then integrating by parts in \eqref{opt1bis} yields (for some constant $C$)
\begin{equation}
 \theta' = -F + \ell-C \quad \text{in} \ (0,{L} ).\label{opt3}
\end{equation}
The above equation implies that
$$
\ell -F-C \geq 0 \quad \text{in} \ (0,{L} ).
$$
On the other hand condition \eqref{opt2} yields
$\ell \theta' =0 \quad \text{in} \ (0,{L} )$ which implies
$\ell(\ell-F-C) =  0 \quad \text{in} \ (0,{L} )$, thanks to relation \eqref{opt3}.

We rewrite the above equality by using the decomposition
$F+C=g^+-g^-$, (where $g^+$ and $g^-$ are the positive and negative parts of $F+C$):
\begin{eqnarray*}
\ell(\ell-F-C)=(\ell-g^++g^+)(\ell-g^++g^-) = (\ell-g^+)^2 +
g^-(\ell-g^+)+\\
+g^+(\ell-g^+)+g^+g^-
=(\ell-g^+)^2 + g^- \ell + g^+ (\ell-F-C)
\end{eqnarray*}
which is the sum of three non-negative terms. Thus
\begin{equation}\label{1.2}
\ell=(F+C)^+
\end{equation}
and in particular, from \eqref{opt3},
\begin{equation}\label{15:18}
 \theta' = (F+C)^- \quad \text{in} \ (0,{L} )
\end{equation}
We deduce that $\theta\in W^{2,\infty}(0,L)$.

\medskip
Using \eqref{recgamma}, we can write
\begin{equation}\label{1.3}
F(s)=-\int_0^s (\lambda_1 \sin \theta(s) -\lambda_2 \cos \theta(s)\,ds= -\lambda_1(y(s)+1) +\lambda_2 x(s)
\end{equation}
The above relation and \eqref{15:18} yield
\eqref{first-opt}.
\end{proof}

\subsection{Optimality conditions on a strictly convex arc}
On a strictly convex arc $\gamma$ where $\theta'>0$ we deduce from the previous section that $\ell=0$ and from \eqref{opt1bis} we see 
that $\theta$ satisfies the ordinary differential equation
\begin{equation}\label{eqdiff1}
-\theta''= \lambda_2 \cos\theta -\lambda_1 \sin \theta\,.
\end{equation}
Multiplying by $\theta'$ and integrating between any point $C$ of the arc with parameter $s_C$ and $s$ yields
\begin{equation}\label{eqdiff2}
\frac{1}{2}\,{\theta'(s)}^2=\frac{1}{2}\,{\theta'(s_C)}^2 - \lambda_2 (\sin\theta(s)-\sin\theta(s_C)) -\lambda_1(\cos \theta(s)-\cos\theta(s_C))\,.
\end{equation}
On the other hand, it is a classical result in differential geometry that the shape derivative of the 
functional $\frac{1}{2}\,\int_\gamma k^2(s)\,ds$ is $-\int_\gamma (k''+ \frac{1}{2}\,k^3) V.n \,ds$,
see for example the Appendix in \cite{BHT}. Therefore, for an optimal arc, we have $k''+ \frac{1}{2}\,k^3=0$.
Differentiating once \eqref{eqdiff1}, we get
$$-\theta'''=-k''= (-\lambda_2 \sin\theta -\lambda_1 \cos \theta) k = \frac{1}{2}\,k^3\,.$$
Therefore, when $k>0$ and comparing with \eqref{eqdiff2}, we finally get 
\begin{Proposition}
On a strictly convex arc, the optimality condition reads
\begin{equation}\label{eqdiff3}
\frac{1}{2}\,{\theta'(s)}^2= - \lambda_2 \sin\theta(s) - \lambda_1 \cos \theta(s))\,.
\end{equation}
\end{Proposition}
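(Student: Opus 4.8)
The plan is to produce a first integral of the optimality equation on the strictly convex piece and then to fix the value of the integration constant by means of the Euler--Lagrange equation of the elastica. First I would note that on a strictly convex sub-arc, where $\theta'>0$, the optimality conditions of Theorem~\ref{Ththeta} force the multiplier $\ell$ to vanish: since $\theta'=(F+C)^-$ and $\ell=(F+C)^+$ have disjoint supports, $\theta'(s)>0$ gives $(F+C)^+(s)=\ell(s)=0$ there. Inserting $\ell=0$ into the weak formulation \eqref{opt1bis} shows that on this sub-arc $\theta$ solves the pendulum equation \eqref{eqdiff1}; multiplying by $\theta'$ and integrating gives \eqref{eqdiff2}, i.e. the quantity $G(s):=\tfrac12\,\theta'(s)^2+\lambda_2\sin\theta(s)+\lambda_1\cos\theta(s)$ is constant along the sub-arc. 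All that remains is to prove that this constant equals $0$, which is precisely \eqref{eqdiff3}.

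To pin the constant I would invoke the classical shape derivative of $\tfrac12\int_\gamma k^2$, namely $-\int_\gamma(k''+\tfrac12 k^3)\,V\cdot n\,ds$: normal perturbations $V$ supported in the relative interior of the strictly convex sub-arc keep the contact points $A,B$ and the prescribed tangents $\tau_A,\tau_B$ fixed, and the length $L$ is a free parameter of $(\mathcal{P}_\alpha)$, so such $V$ are admissible and optimality yields $k''+\tfrac12 k^3=0$ pointwise, with $k=\theta'$. Differentiating \eqref{eqdiff1} once gives $-\theta'''=(-\lambda_2\sin\theta-\lambda_1\cos\theta)\,\theta'$, that is $-k''=(-\lambda_2\sin\theta-\lambda_1\cos\theta)\,k$; substituting $-k''=\tfrac12 k^3$ and dividing by $k=\theta'>0$ yields exactly $\tfrac12\,\theta'(s)^2=-\lambda_2\sin\theta(s)-\lambda_1\cos\theta(s)$, which is \eqref{eqdiff3} and, consistently, says $G\equiv0$.

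The delicate point is this last step: one must be sure that on a strictly convex sub-arc of the \emph{constrained} minimizer the full elastica equation $k''+\tfrac12 k^3=0$ holds, and not merely the pendulum equation with its Lagrange multipliers. If one prefers to stay within the one-dimensional formulation, the rescaling $\tilde\theta(t)=\theta(Lt)$ recasts $(\mathcal{P}_\alpha)$ as a problem on $[0,1]$ in which $L$ appears explicitly, and stationarity in $L$ produces the identity $E(\gamma)=-\lambda_1\int_0^L\cos\theta\,ds-\lambda_2\int_0^L\sin\theta\,ds$, i.e. $\int_0^L G(s)\,ds=0$, which together with the first integral forces $G\equiv0$ once the arc is known to be strictly convex throughout; since that fact is only established later in this section, the local shape-derivative argument is the one I would use to prove the present Proposition.
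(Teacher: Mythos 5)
Your proposal is correct and follows essentially the same route as the paper: $\ell=0$ where $\theta'>0$, hence the pendulum equation \eqref{eqdiff1} and its first integral \eqref{eqdiff2}, then the shape-derivative identity $k''+\tfrac12\,k^3=0$ for interior normal perturbations, combined with the differentiated \eqref{eqdiff1} and division by $k=\theta'>0$, which fixes the constant and gives \eqref{eqdiff3}. The alternative you sketch (stationarity with respect to $L$) is not needed; the local shape-derivative argument you settle on is exactly the paper's proof.
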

\section{Proof of the main theorem}
\subsection{There are no segments on the boundary}
Let us assume that there is a segment on the boundary of the optimal arc, starting at some point $M=(x_M,y_M)$
finishing at some point $N=(x_N,y_N)$ and let us denote by $\beta=\theta(s_M)=\theta(s_N)$ the angle between
the horizontal axis and this segment ($s_M$ denotes the curvilinear abscissa at point $M$). 

According to Theorem \ref{Ththeta}, the function $\theta(s)$ is globally $C^1$, therefore, $\theta'$ vanishes at point $M$: $\theta'(s_M)=0$. Inserting in \eqref{eqdiff3} yields
$\lambda_1=-\lambda_2 \tan \beta$ (the case $\beta=\pi/2$ is treated exactly in the same way) and 
\begin{equation}\label{eqdiff4}
{\theta'}^2(s)= \frac{2\lambda_2}{\cos \beta}\,\sin (\beta - \theta)\ \ s\in [0,s_M].
\end{equation}
Using such a relation, it can be seen that $\theta'$ cannot vanish at two extremities of a strictly convex arc.
In other words, if there is a segment on the optimal arc, this one is unique.

\medskip
Exactly in the same way, but starting at the point $B=(\sin 2\alpha, \cos 2\alpha)$ instead of $A$, we can see that
the curvature $\theta'$ vanishes at point $N$: $\theta'(s_N)=0$. Inserting in \eqref{eqdiff3} yields
$\lambda_1=-\lambda_2 \tan \beta$ and we also have
\begin{equation}\label{eqdiff5}
{\theta'}^2(s)= \frac{2\lambda_2}{\cos \beta}\,\sin (\beta - \theta)\ \ s\in [s_N,L].
\end{equation}
Now, by convexity, $\theta>\beta$ on $(s_N,L)$ while $\theta < \beta$ on $(0,s_M)$ then $\sin (\beta - \theta)$ have opposite
signs on the intervals $(0,s_M)$ and $(s_N,L)$ which is not possible.

\subsection{Expression of the optimal energy}
Since we know now that there are no segments on the optimal arc, the optimality condition \eqref{eqdiff1} holds on the whole arc.
We follow a similar approach as Section \ref{section2.2}.
Integrating this relation between $0$ and $L$ yields
\begin{equation}\label{eqdiff6}
\theta'(0)-\theta'(L) = \lambda_2 \sin 2\alpha - \lambda_1 (1-\cos 2\alpha)\,.
\end{equation}
In the same way, multiplying \eqref{eqdiff1} by $\theta'$ and integrating between $0$ and $L$ yields
\begin{equation}\label{eqdiff7}
\frac{1}{2}\,\left({\theta'(L)}^2 -{\theta'(0)}^2 \right) = - \lambda_2 (\sin 2\alpha ) -\lambda_1(\cos 2\alpha -1 )\,.
\end{equation}
Adding \eqref{eqdiff6} and \eqref{eqdiff7}, we get
\begin{equation}\label{eqdiff8}
\left({\theta'(L)} -{\theta'(0)} \right) \left(\frac{1}{2}\, ({\theta'(L)} + {\theta'(0)}) -1 \right) = 0 \,.
\end{equation}
Now, since the curve is externally tangent to the disk at the points $A$ and $B$, the curvature at these points is less than one.
Therefore ${\theta'(L)} + {\theta'(0)} \leq 2$ and we infer, in all cases that ${\theta'(L)} = {\theta'(0)}$. This implies, according
to \eqref{eqdiff6} that
\begin{equation}\label{eqdiff8bis}
\lambda_2 \sin 2\alpha - \lambda_1 (1-\cos 2\alpha) =0
\end{equation}
or $\lambda_2 = \lambda_1 \tan \alpha$. Coming back to \eqref{eqdiff3}, this yields
\begin{equation}\label{eqdiff9}
\frac{1}{2}\,{\theta'(s)}^2= - \frac{\lambda_1}{\cos\alpha} \cos (\theta(s) - \alpha)\,.
\end{equation}
To determine the other Lagrange multiplier $\lambda_1$, we use the constraint $\int_0^L \cos \theta(s)\, ds =\sin 2\alpha$ where we make
the change of variable $u=\theta(s)$. This gives
$$\sqrt{\frac{\cos \alpha}{-2\lambda_1}}\,\int_0^{2\alpha} \frac{\cos u\,du}{\sqrt{\cos(u-\alpha)}}\,=\,\sin 2\alpha$$
or
$$-2\lambda_1 = \frac{\cos \alpha}{\sin^2 2\alpha}\, \left(\int_0^{2\alpha} \frac{\cos u\,du}{\sqrt{\cos(u-\alpha)}} \right)^2\,.$$
Now
$$\int_0^{2\alpha} \frac{\cos u\,du}{\sqrt{\cos(u-\alpha)}} = \int_{-\alpha}^{\alpha} \frac{\cos(\alpha +t)\,dt}{\sqrt{\cos(t)}} =
2\cos \alpha \int_{0}^{\alpha} \sqrt{\cos t}\,dt.$$
Therefore
$$-\lambda_1 = \frac{\cos \alpha}{2 \sin^2 \alpha}\,\left(\int_{0}^{\alpha} \sqrt{\cos t}\,dt\right)^2.$$
Replacing in \eqref{eqdiff9} yields
\begin{equation}\label{eqdiff10}
\frac{1}{2}\,{\theta'(s)}^2= \frac{1}{2 \sin^2 \alpha}\,\left(\int_{0}^{\alpha} \sqrt{\cos t}\,dt\right)^2 \,\cos (\theta(s) - \alpha)\,.
\end{equation}
Integrating \eqref{eqdiff10} between $0$ and $L$ gives the value of the energy for the optimal arc, we denote it by $E(\alpha)$:
\begin{equation}\label{energy}
E(\alpha)=\frac{1}{2}\,\int_{0}^L {\theta'(s)}^2= \frac{1}{\sin \alpha}\,\left(\int_{0}^{\alpha} \sqrt{\cos t}\,dt\right)^2 \,.
\end{equation}
\begin{Remark}
Using Cauchy-Schwarz inequality, we can see that the energy for the optimal arc is indeed better than the energy of the corresponding arc of circle:
$$E(\alpha) \leq \frac{1}{\sin \alpha} \,\int_{0}^{\alpha} \cos t\,dt \int_{0}^{\alpha} 1\,dt= \alpha = E(\mbox{arc of circle}).$$
\end{Remark}

\subsection{Study of the function $\alpha\mapsto E(\alpha)$}
Here we want to study the function $E(\alpha)$ defined in \eqref{energy}, for $\alpha \in (0,\pi/2]$. We prove
\begin{Proposition}
The function $\alpha \mapsto E(\alpha)$ is concave and sub-additive
\begin{equation}\label{subad}
\forall \alpha, \beta \in (0,\pi/2],\quad E(\alpha + \beta)< E(\alpha) + E(\beta)\,.
\end{equation}
\end{Proposition}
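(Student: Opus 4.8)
The plan is to prove that $E$ is (strictly) concave on $(0,\pi/2]$ and that $E(0^+)=0$; the sub-additivity \eqref{subad} then follows from these two facts by the classical chord argument. So the real content is the sign of $E''$, and I would aim to show $E''<0$ on $(0,\pi/2)$.

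Set $a(\alpha):=\int_0^\alpha\sqrt{\cos t}\,dt$, so that $a'=\sqrt{\cos\alpha}$, $a(0)=0$ and, by \eqref{energy}, $G(\alpha):=E(\alpha)\sin\alpha=a(\alpha)^2$. Differentiating $G=a^2$ twice gives $G'=2a\sqrt{\cos\alpha}$ and $G''=2\cos\alpha-\dfrac{a\sin\alpha}{\sqrt{\cos\alpha}}$. Using $G''=E''\sin\alpha+2E'\cos\alpha-E\sin\alpha$ together with $\sin^2\alpha\,E'=G'\sin\alpha-G\cos\alpha$ to eliminate $E'$, and grouping the terms linear in $a$ by means of $\sin^2\alpha=1-\cos^2\alpha$, one obtains the identity
\begin{equation}\label{E2}
\sin^3\alpha\;E''(\alpha)=(1+\cos^2\alpha)\,a^2-\frac{(1+3\cos^2\alpha)\sin\alpha}{\sqrt{\cos\alpha}}\,a+2\cos\alpha\sin^2\alpha,
\end{equation}
where $a=a(\alpha)$. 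The right-hand side is a quadratic in $a$ with positive leading coefficient, and, remarkably, its discriminant equals $(1-\cos^2\alpha)^3/\cos\alpha$, because $(1+3c^2)^2-8c^2(1+c^2)=(1-c^2)^2$ with $c=\cos\alpha$. Hence its two roots are the elementary functions
$$a_-(\alpha)=\frac{2\cos^{3/2}\alpha\,\sin\alpha}{1+\cos^2\alpha},\qquad a_+(\alpha)=\frac{\sin\alpha}{\sqrt{\cos\alpha}},$$
and \eqref{E2} reads $\sin^3\alpha\,E''=(1+\cos^2\alpha)(a-a_-)(a-a_+)$, so that $E''\le0$ is equivalent to the pinching $a_-(\alpha)\le a(\alpha)\le a_+(\alpha)$. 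I expect the manipulation producing \eqref{E2} in exactly this shape — i.e.\ with a discriminant that is a perfect power of $1-\cos^2\alpha$ — to be the only delicate step; everything afterwards is routine.

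To prove the pinching on $(0,\pi/2)$, note that $a(0)=a_-(0)=a_+(0)=0$, so it suffices to compare derivatives. A direct computation gives
$$a_+'(\alpha)-a'(\alpha)=\frac{\sin^2\alpha}{2\cos^{3/2}\alpha}>0,\qquad a'(\alpha)-a_-'(\alpha)=\frac{4\sqrt{\cos\alpha}\,\sin^2\alpha}{(1+\cos^2\alpha)^2}>0$$
on $(0,\pi/2)$; the first is immediate, and the second follows from $(1+\cos^2\alpha)^2-(\cos^4\alpha+6\cos^2\alpha-3)=4\sin^2\alpha$. Integrating from $0$ yields $a_-(\alpha)<a(\alpha)<a_+(\alpha)$, hence $E''(\alpha)<0$ throughout $(0,\pi/2)$; in particular $E$ is strictly concave on $(0,\pi/2]$.

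Finally, by \eqref{energy}, $E(\alpha)=a(\alpha)^2/\sin\alpha\to0$ as $\alpha\to0^+$, so extending $E$ by $E(0):=0$ keeps it concave on $[0,\pi/2]$. Given $\alpha,\beta>0$ with $\alpha+\beta\le\pi/2$ (the range in which \eqref{energy} makes $E(\alpha+\beta)$ meaningful), put $t:=\alpha/(\alpha+\beta)\in(0,1)$; strict concavity gives $E(\alpha)=E\bigl(t(\alpha+\beta)+(1-t)\cdot0\bigr)>t\,E(\alpha+\beta)+(1-t)E(0)=t\,E(\alpha+\beta)$, and in the same way $E(\beta)>(1-t)\,E(\alpha+\beta)$; adding these two inequalities gives \eqref{subad}.
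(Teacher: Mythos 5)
Your proposal is correct and is essentially the paper's proof in a different dress: the factorization $\sin^3\alpha\,E''=(1+\cos^2\alpha)(a-a_-)(a-a_+)$ is exactly the paper's factored form of $E''$ in terms of $h(\alpha)=\sqrt{\cos\alpha}\,a(\alpha)/\sin\alpha$ (your bound $a\le a_+$ is $h\le 1$, your bound $a\ge a_-$ is $h\ge 1/(1+\tan^2\alpha/2)$, and your computation of $a'-a_-'$ coincides with the paper's $R'$), while your concavity-plus-$E(0)=0$ chord argument plays the role of the paper's double integral of $E''$. The only cosmetic differences are that you obtain the upper bound $a\le a_+$ by comparing derivatives where the paper uses Cauchy--Schwarz, and you are slightly more explicit about restricting to $\alpha+\beta\le\pi/2$, where \eqref{energy} defines $E(\alpha+\beta)$.
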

\begin{proof}
The first derivative of $E(\alpha)$ can be written
$$E'(\alpha)=h(\alpha) (2-h(\alpha))\quad \mbox{where}\quad h(\alpha)=\frac{\sqrt{\cos \alpha}}{\sin \alpha}\, \int_{0}^{\alpha} \sqrt{\cos t}\,dt.$$
Note that, by Cauchy-Schwarz inequality,
$$\int_{0}^{\alpha} \sqrt{\cos t}\,dt \leq \sqrt{\alpha \sin \alpha}\  \Longrightarrow \ h(\alpha) \leq 
\sqrt{\frac{\alpha \cos\alpha}{\sin \alpha}}\,\leq 1\,.$$
Now the second derivative of $E(\alpha)$ is
$$E''(\alpha)=\frac{2\cos\alpha}{\sin\alpha}\,(1-h(\alpha))\left(1-h(\alpha)(1+\tan^2(\alpha)/2)\right).$$
Therefore, to prove that $E$ is concave on $(0,\pi/2]$, it suffices to prove that 
$$h(\alpha) \geq \frac{1}{1+\frac{\tan^2(\alpha)}{2}}\ \Longleftrightarrow \int_{0}^{\alpha} \sqrt{\cos t}\,dt
\geq \frac{2 \sin \alpha}{\sqrt{\cos \alpha}(2+\tan^2(\alpha))}.$$
Now the difference 
$$R(\alpha)= \int_{0}^{\alpha} \sqrt{\cos t}\,dt - \frac{2 \sin \alpha}{\sqrt{\cos \alpha}(2+\tan^2(\alpha))}$$
satisfies 
$$R'(\alpha)=\frac{16 \sin^2(\alpha) \sqrt{\cos\alpha}}{(\cos (2\alpha) +3)^2} \,\geq 0\;,\quad R(0)=0$$
and then $R(\alpha) \geq 0$ (and $E''(\alpha)<0$ if $\alpha >0$) which proves the result and the (strict) concavity of $E$.

For the sub-additivity, we just write, since $E(0)=0$:
$$E(\alpha + \beta) - E(\alpha) - E(\beta) = \int_0^\beta \int_v^{v+\alpha} E''(t)\,dt\,dv <0\,.$$
\end{proof}
\subsection{Conclusion}
First we use the strict sub-additivity of the function $E$ to claim that if
we have several contact points in a range less than $\pi/2$, it is better to take only the two extremities.
Therefore, there are only two possibilities:
\begin{itemize}
\item either there are only two contact points, which are diametrically opposite
\item or there are three contact points with angles (with the previous notations) $\alpha,\beta,\pi -\alpha-\beta$
satisfying $0<\alpha\leq \pi/2, \;0<\beta\leq \pi/2, \pi/2 \leq \alpha + \beta \leq \pi$. 
\end{itemize} 
We are going to prove that we have necessarily equality $\alpha=\pi/2$ or $\beta=\pi/2$ or $\alpha + \beta=\pi/2$
which implies, thanks to the sub-additivity, that we have better to take only two points and we are led to the first case.
Let us fix $\gamma=\alpha + \beta$ and choose the optimal way to split the angle $\gamma$ in two parts. For that
purpose, let us introduce the function $e(t)=E(t)+E(\gamma -t)$. To satisfy the constraints, we have to choose $t$ between
$\gamma - \pi/2$ and $\pi/2$. The derivative of $e(t)$ is $e'(t)=E'(t)-E'(\gamma -t)$. Now by concavity of the function $E$,
we see that $e(t)$ is increasing when $t\in [\gamma -\pi/2, \gamma/2]$ and decreasing when $t\in [\gamma/2, \pi/2]$.
Therefore, $e(t)$ reaches its minimum when $t=\gamma -\pi/2$ or $t=\pi/2$. This corresponds to the equality cases
previously mentioned and that shows that the optimal configuration corresponds to only two contact points diametrically
opposite.

\medskip
Let us now give the explicit expression of the optimal arc when $\alpha=\pi/2$.
According to \eqref{eqdiff8bis},
it implies that $\lambda_1=0$ and using \eqref{eqdiff3} we see that the function $\theta$ is solution of the ordinary 
differential equation ${\theta'}^2=-2\lambda_2 \sin \theta$. We determine $\lambda_2$ by writing
$\int_0^L \sin \theta(s)\,ds=2$ which gives
$$-2\lambda_2=\left(\int_0^{\pi/2} \sqrt{\sin u}\,du\right)^2\,.$$
Let us denote by $a$ the integral
$$a=\int_0^{\pi/2} \sqrt{\sin u}\,du = \int_0^{\pi/2} \sqrt{\cos u}\,du$$
thus, finally $\theta$ is solution of
\begin{equation}\label{eqdifinal}
\theta'=\frac{1}{a}\,\sqrt{\sin \theta},\ \theta(0)=0\,.
\end{equation}
The optimal curve is now given by
\begin{equation}\label{optcurve}
\left\lbrace\begin{array}{l}
x(s)=\int_0^s \cos\theta(t)\,dt=\frac{1}{a}\;\int_0^{\theta(s)}\frac{\cos u}{\sqrt{\sin u}}\, du=
\frac{2}{a}\,\sqrt{\cos \theta(s)} \\
y(s)=-1+\int_0^s \sin\theta(t)\,dt=-1+\frac{1}{a}\;\int_0^{\theta(s)}\frac{\sin u}{\sqrt{\sin u}}\, du=-1+
\frac{1}{a}\;\int_0^{\theta(s)} \sqrt{\sin u}\, du\,.
\end{array}\right.
\end{equation}
It remains to find the expression of the length of the arc. The semi-length $L$ corresponds to the value $\theta(L)=\pi$.
Now the exact solution of the differential equation \eqref{eqdifinal} is $\theta(s)=2 am(\frac{as}{2}\,+b\,|\,2) +\frac{\pi}{2}$
where $am$ is the {\it Jacobi amplitude}, inverse of the elliptic integral of the first kind $F$, see \cite{AbSt}
and $b=F(-\frac{\pi}{4}|2)$.
Therefore, $L$ must satisfy 
$$\frac{\pi}{2} = \theta\left(\frac{L}{2}\,\right)=2 am(\frac{aL}{4}\,+b|2) +\frac{\pi}{2}$$
or $am(\frac{aL}{4}\,+b|2) = 0$. Therefore, $L=-4b/a$. Now
$$b=F(-\frac{\pi}{4}|2)=\frac{1}{2}\,\int_0^{-\pi/2} \frac{dt}{\sqrt{\cos t}}=-
\frac{1}{2}\,\int_0^{\pi/2} \frac{dt}{\sqrt{\cos t}}$$
and finally
\begin{equation}\label{lengthL}
L=2 \frac{\int_0^{\pi/2} \frac{dt}{\sqrt{\cos t}}}{\int_0^{\pi/2} \sqrt{\cos t}\,dt}\,.
\end{equation}
The total length of the curve is twice this value. At last, the elastic energy of the optimal domain is
$$E(\Omega^*)=2 E\left(\frac{\pi}{2}\right) = 2 \left(\int_0^{\pi/2} \sqrt{\cos t}\,dt\right)^2\,.$$
One more time, Cauchy-Schwarz inequality shows that this value is less than $\pi$, elastic energy of the unit circle.
The numerical value is 2.8711.

%%%%%%%%%%%%%%%%%%%%%%%%%%%%%%%%%%%%%%%%%%%%%%%%%%%%%%%%%
\section*{Acknowledgement}

This paper has been realized while Othmane Mounjid was Master student at \'Ecole des Mines de Nancy - Universit\'e de Lorraine
involved in a research project "Parcours Recherche" with A. Henrot.

The work of Antoine Henrot is supported by the project ANR-12-BS01-0007-01-OPTIFORM {\it Optimisation de formes} 
financed by the French Agence Nationale de la Recherche (ANR).

%%%%%%%%%%%%%%%%%%%%%%%%%%%%%%%%%%%%%%%%%%%%%%%%%%%%%%%%5

\end{document}